\documentclass[12pt,reqno,draft]{amsart}
\usepackage{enumerate, latexsym, amsmath, amsfonts, amssymb, amsthm, graphicx, color, }
 \textwidth=13.5cm
   \textheight=22cm
\hoffset=-1cm\voffset-0.5truecm
\def\pmod #1{\ ({\rm{mod}}\ #1)}
\def\Z{\Bbb Z}

\def\Q{\Bbb Q}

\def\F{\Bbb F}

\def\bg{\bigg}
\def\({\bg(}
\def\){\bg)}

\def\sgn{{\rm sgn}}

\def\Arg{{\rm Arg}}

\def\Ack{\medskip\noindent {\bf Acknowledgments}}
\theoremstyle{plain}
\newtheorem{theorem}{Theorem}

\newtheorem{lemma}{Lemma}

\theoremstyle{definition}

\theoremstyle{remark}

\makeatletter
\@namedef{subjclassname@2010}{%
  \textup{2010} Mathematics Subject Classification}
\makeatother
 \vspace{4mm}

\begin{document}
 \baselineskip=17pt
\hbox{}
\medskip
\title[Squares in $\mathbb{F}_{p^2}$ and permutations involving primitive roots]
{Squares in $\mathbb{F}_{p^2}$ and permutations involving primitive roots}
\date{}
\author[Hai-Liang Wu] {Hai-Liang Wu}
\thanks{2010 {\it Mathematics Subject Classification}.
Primary 11A15; Secondary 05A05, 11R18.
\newline\indent {\it Keywords}. quadratic residues, permutations, primitive roots, local fields.
\newline \indent Supported by the National Natural Science
Foundation of China (Grant No. 11971222).}

\address {(Hai-Liang Wu) Department of Mathematics, Nanjing
University, Nanjing 210093, People's Republic of China}
\email{{\tt whl.math@smail.nju.edu.cn}}

\begin{abstract}
Let $p=2n+1$ be an odd prime, and let
$\zeta_{p^2-1}$ be a primitive $(p^2-1)$-th root of unity in the algebraic closure $\overline{\mathbb{Q}_p}$ of $\mathbb{Q}_p$.
We let
$g\in\mathbb{Z}_p[\zeta_{p^2-1}]$ be a primitive root modulo $p\mathbb{Z}_p[\zeta_{p^2-1}]$
Let $\Delta\equiv3\pmod4$ be an arbitrary quadratic non-residue modulo $p$ in $\mathbb{\Z}$. By the Local
Existence Theorem of class field theory we have $\mathbb{Q}_p(\sqrt{\Delta})=\mathbb{Q}_p(\zeta_{p^2-1})$.
For all $x\in\mathbb{Z}[\sqrt{\Delta}]$ and $y\in\mathbb{Z}_p[\zeta_{p^2-1}]$ we use $\bar{x}$ and $\bar{y}$ to denote the elements
$x\mod p\mathbb{Z}[\sqrt{\Delta}]$ and $y\mod p\mathbb{Z}_p[\zeta_{p^2-1}]$ respectively. If we set
$a_k=k+\sqrt{\Delta}$ for $0\le k\le p-1$, then we can view the sequence
$$S:=\overline{a_0^2},\cdots,\overline{a_0^2n^2},\cdots,\overline{a_{p-1}^2},\cdots,
\overline{a_{p-1}^2n^2}\cdots,
\overline{1^2},\cdots,\overline{n^2}$$
as a permutation $\sigma_p(g)$ of the sequence
$$S^*:=\overline{g^2},\overline{g^4},\cdots,\overline{g^{p^2-1}}.$$
We determine the sign of $\sigma_p(g)$ completely in this paper.
\end{abstract}
\maketitle

\section{Introduction}
\setcounter{lemma}{0}
\setcounter{theorem}{0}
\setcounter{corollary}{0}
\setcounter{remark}{0}
\setcounter{equation}{0}
\setcounter{conjecture}{0}
\setcounter{proposition}{0}

Investigating permutation problems in finite fields is a classical topic in number theory. First of all, many permutations on finite fields are induced by permutation polynomials over finite fields.
For instance, let $p$ be an odd prime and let $a\in\Z$ with $p\nmid a$. Clearly $f_a(x)=ax$ is a permutation polynomial over
$\F_p$. The famous Zolotarev lemma \cite{Z} says that the sign of the permutation on $\F_p$ induced by $f_a(x)$ coincides with the Legendre symbol $(\frac{a}{p})$. Also, When $k\in\Z^+$ and
$\gcd(k,p-1)=1$, the polynomial $g_k(x) = x^k$ is a permutation polynomial over $\F_p$. L.-Y Wang and the
author \cite{WW} determined the sign of this permutation induced by $g_k(x)$ by extending the method of
G. Zolotarev. In addition, W. Duke
and K. Hopkins \cite{DH} generalized this topic. They gave
the law of quadratic reciprocity on finite groups by studying the signs of some permutations induced by permutation polynomials over finite groups.

In contrast with the above, Sun \cite{S} investigated some permutations on $\F_p$ involving squares in $\F_p$. For example, let $p=2n+1$ be an odd prime and let $b_1,\cdots,b_n$ be a sequence of all the $n$ quadratic residues among $1,\cdots,p-1$ in the ascending order. Then it is easy to see that the sequence
\begin{align}\label{sequence 1}
\overline{1^2},\cdots,\overline{n^2}.
\end{align}
is a permutation $\tau_p$ on
\begin{align}\label{sequence 2}
\overline{b_1},\cdots,\overline{b_n}.
\end{align}
Here $\overline{a}$ denotes the element $a\mod p\Z$ for each $a\in\Z.$ Sun first studied this permutation and he proved that
$$\sgn(\tau_p)=\begin{cases}1&\mbox{if}\ p\equiv 3\pmod8,\\(-1)^{(h(-p)+1)/2}&\mbox{if}\ p\equiv 7\pmod8,\end{cases}$$
where $h(-p)$ is the class number of $\Q(\sqrt{-p})$ and $\sgn(\tau_p)$ is the sign of $\tau_p$. Sun also gave the explicit formula of the product
$$\prod_{1\le j<k\le \frac{p-1}{2}}(e^{2\pi ik^2/p}-e^{2\pi i j^2/p}).$$
This product has deep connections with the class number of the quadratic field $\Q(\sqrt{-p})$. Readers may see \cite{S} for details. Later the author \cite{Wu} gave the sign of $\tau_p$ in the case $p\equiv 1\pmod4$. Motivated by Sun's work, the author studied some permutations on $\F_p$ involving primitive roots modulo $p$. In fact, let $g_p\in\Z$ be a primitive root modulo $p$. Then the sequence
\begin{align}\label{sequence 3}
\overline{g_p^2},\overline{g_p^4},\cdots,\overline{g_p^{p-1}}.
\end{align}
is a permutation on the sequence (\ref{sequence 2}). In \cite{Wu} the author gave the sign of this permutation in the case $p\equiv1\pmod4$.

In view of the above, we actually investigated the permutations involving squares in $\F_p$.
Inspired by this, in this paper we mainly focus on the permutations concerning squares in
$\F_{p^2}$. We first introduce some notations and basic facts.

Let $p=2n+1$ be an odd prime, and let $\zeta_{p^2-1}$ be a primitive $(p^2-1)$-th root of unity in the algebraic closure $\overline{\Q_p}$ of $\Q_p$. By \cite[p.158 Propositon 7.12]{N} it is easy to see that $[\Q_p(\zeta_{p^2-1}):\Q_p]=2$ and that the integral closure of $\Z_p$ in $\Q_p(\zeta_{p^2-1})$ is $\Z_p[\zeta_{p^2-1}]$. Noting that $p\Z_p$ is unramified in $\Q_p(\zeta_{p^2-1})$, we therefore obtain 
$\Z_p[\zeta_{p^2-1}]/p\Z_p[\zeta_{p^2-1}]\cong \F_{p^2}$. Let $\Delta\equiv3\pmod4$ be an arbitrary quadratic non-residue modulo $p$ in $\Z$. Then clearly $p$ is inert in the field $\Q(\sqrt{\Delta})$.
Hence $\Z[\sqrt{\Delta}]/p\Z[\sqrt{\Delta}]\cong \F_{p^2}.$ Since $\Q_p(\zeta_{p^2-1})$ and $\Q_p(\sqrt{\Delta})$ are both unramified extensions of $\Q_p$ of degree $2$,
by the Local Existence Theorem (cf. \cite[p.321 Theorem 1.4]{N}) we see that
$$\Q_p(\zeta_{p^2-1})=\Q_p(\sqrt{\Delta}).$$

By the structure of the unit group of local field (cf. \cite[p.136 Proposition 5.3]{N}) we have
$$\Z_p[\zeta_{p^2-1}]^{\times}=(\zeta_{p^2-1})\times (1+p\Z_p[\zeta_{p^2-1}]).$$
Here $(\zeta_{p^2-1})=\{\zeta_{p^2-1}^k: k\in\Z\}.$
Hence we can let $g\in\Z_p[\zeta_{p^2-1}]$ be a primitive root modulo $p\Z_p[\zeta_{p^2-1}]$ with $g\equiv \zeta_{p^2-1}\pmod {p\Z_p[\zeta_{p^2-1}]}$.
For all $x\in\mathbb{Z}[\sqrt{\Delta}]$ and $y\in\mathbb{Z}_p[\zeta_{p^2-1}]$ we use the symbols
$\bar{x}$ and $\bar{y}$ to denote the elements
$x\mod p\mathbb{Z}[\sqrt{\Delta}]$ and $y\mod p\mathbb{Z}_p[\zeta_{p^2-1}]$ respectively. If we set
$a_k=k+\sqrt{\Delta}$ for $0\le k\le p-1$, then it is easy to verify that
$$\{a_k^2j^2: 0\le k\le p-1,1\le j\le n\}\cup \{j^2: 1\le j\le n\}$$ is a complete system of representatives of $(\Z[\sqrt{\Delta}]/p\Z[\sqrt{\Delta}])^{\times2}$.
We can view the sequence
\begin{align}\label{sequence 4}
S:=\overline{a_0^2},\cdots,\overline{a_0^2n^2},\cdots,\overline{a_{p-1}^2},\cdots,
\overline{a_{p-1}^2n^2}\cdots,
\overline{1^2},\cdots,\overline{n^2}
\end{align}
as a permutation $\sigma_p$ of the sequence
\begin{align}\label{sequence 5}
S^*:=\overline{g^2},\overline{g^4},\cdots,\overline{g^{p^2-1}}.
\end{align}
To state our results, we let $\beta_0\in\{0,1\}$ be the integer satisfying
\begin{equation}\label{beta}
(-1)^{\beta_0}\equiv \frac{(\sqrt{\Delta})^{\frac{p-1}{2}}}{\zeta_{p^2-1}^{\frac{p^2-1}{4}}}\pmod{p\Z_p[\zeta_{p^2-1}]}.
\end{equation}
Throughout this paper, we use the symbol $\sgn(\sigma_p(g))$ to denote the sign of $\sigma_p(g)$. Now we are in the position to state the main results of this paper.
\begin{theorem}\label{theorem A}
$$\sgn(\sigma_p(g))=\begin{cases}(-1)^{\beta_0+\frac{p+3}{4}}&\mbox{if}\ p\equiv 1\pmod4,\\(-1)^{\frac{h(-p)+1}{2}+\beta_0}&\mbox{if}\ p\equiv 3\pmod4\ \text{and}\ p>3,
\\(-1)^{1+\beta_0}&\mbox{if}\ p=3,
\end{cases}$$
where $h(-p)$ is the class number of $\Q(\sqrt{-p})$.
\end{theorem}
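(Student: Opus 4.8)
The plan is to reduce the computation of $\sgn(\sigma_p(g))$ to a ratio of two Vandermonde-type products of differences, following the strategy of Sun \cite{S} and the author \cite{Wu}. Write $N=(p^2-1)/2$, and recall that both $S$ and $S^*$ are orderings of the same set, the nonzero squares of $\F_{p^2}\cong \Z_p[\zeta_{p^2-1}]/p\Z_p[\zeta_{p^2-1}]$. If $\sigma_p(g)$ is defined by $S_i=S^*_{\sigma_p(g)(i)}$, then reindexing the product of all pairwise differences yields
\begin{equation*}
\prod_{1\ls i<j\ls N}(S_j-S_i)=\sgn(\sigma_p(g))\prod_{1\ls i<j\ls N}(S^*_j-S^*_i)
\end{equation*}
in $\F_{p^2}$. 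Since the two products differ only by the factor $\sgn(\sigma_p(g))=\pm1$ and $p$ is odd, I would lift the representatives $a_k=k+\sqrt{\Delta}$, the integers $j$, and the powers $\zeta_{p^2-1}^m$ to $\Z_p[\zeta_{p^2-1}]$, evaluate both products there, and read off the sign from
\begin{equation*}
\sgn(\sigma_p(g))\equiv \frac{\prod_{i<j}(S_j-S_i)}{\prod_{i<j}(S^*_j-S^*_i)}\pmod{p\Z_p[\zeta_{p^2-1}]}.
\end{equation*}

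Next I would evaluate the denominator $P^*:=\prod_{i<j}(S^*_j-S^*_i)$. Since $\bar g\equiv\zeta_{p^2-1}$ has order $p^2-1$, the element $q:=\bar g^2$ is a primitive $N$-th root of unity, and $S^*_m=q^m$ runs over all $N$-th roots of unity as $m=1,\dots,N$. Hence $P^*=q^{\sum_{i<j}i}\prod_{d=1}^{N-1}(q^d-1)^{N-d}$, which I would evaluate modulo $p$ via the cyclotomic identity $\prod_{d=1}^{N-1}(x-q^d)=1+x+\cdots+x^{N-1}$ together with the discriminant of $x^N-1$. The subtlety here is to pin down the correct square root, that is, to track the overall sign and the accumulated power of $q$ rather than merely the square $P^{*2}$.

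The heart of the argument is the numerator $P:=\prod_{i<j}(S_j-S_i)$, which I would organize using the multiplicative structure of $S$. Grouping the entries into the ``directions'' $a_k$ ($0\ls k\ls p-1$) and the $\F_p$-block $\overline{1^2},\dots,\overline{n^2}$, the differences split into three families: the within-direction factors $\overline{a_k^2}\,\overline{(j^2-j'^2)}$ together with the analogous $\overline{j^2-j'^2}$ from the $\F_p$-block, the cross-direction differences $\overline{a_k^2j^2-a_{k'}^2j'^2}$, and the mixed differences $\overline{a_k^2j^2-j'^2}$. The within-block factors reproduce exactly the products $\prod_{1\ls j<j'\ls n}(\,\overline{j^2}-\overline{j'^2}\,)$ studied by Sun and by the author, and this is where the class number $h(-p)$ (for $p\eq3\pmod4$) and the term $\tfrac{p+3}{4}$ (for $p\eq1\pmod4$) enter, through the link between this product, the quadratic Gauss sum, and the analytic class number formula. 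The normalization $\beta_0$ arises when the powers of $\sqrt{\Delta}$ accumulated from the $a_k$ are converted into powers of $\zeta_{p^2-1}$, i.e.\ from comparing the two square roots of $-1$ given by $(\sqrt{\Delta})^{(p-1)/2}$ and $\zeta_{p^2-1}^{(p^2-1)/4}$ as in \eqref{beta}.

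Finally I would combine the three contributions, carry out the case analysis according to $p\bmod 4$ (treating $p=3$ separately, where the $a_k$-directions degenerate), and simplify using the results of Sun \cite{S} and the author \cite{Wu} for the $\F_p$-part. I expect the main obstacle to be twofold: evaluating the cross-direction product $\prod\overline{(a_k^2j^2-a_{k'}^2j'^2)}$ in closed form, and correctly tracking the signs of the square roots throughout, namely both the square root implicit in $P^*$ and the model-matching sign $\beta_0$, since an error of $\pm1$ at any point alters the final answer. Controlling these signs is precisely what forces the hypothesis $\Delta\eq3\pmod4$ together with the chosen normalizations of $g$ and $\beta_0$.
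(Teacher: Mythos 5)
Your plan follows the paper's strategy in all essentials: you compute $\sgn(\sigma_p(g))$ as the ratio of the two Vandermonde-type products over $\F_{p^2}$, you split the $S$-product according to the block structure $\{\overline{a_k^2j^2}\}\cup\{\overline{j^2}\}$ into within-direction, cross-direction and mixed families (this is exactly what the paper's Lemmas \ref{lemma ak A}--\ref{lemma ak D} carry out, via the quantities $A_p$, $B_p$, $C_p$, $D_p$ and the identity $\prod_{c}(x-c)=x^{(p-1)/2}-1$ over quadratic residues $c$), and you correctly identify $\beta_0$ as the bookkeeping constant from rewriting powers of $\sqrt{\Delta}$ as powers of $\zeta_{p^2-1}$ as in \eqref{beta}. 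Two remarks. First, the one step where your route genuinely differs is the evaluation of $P^*=\prod_{i<j}(g^{2j}-g^{2i})$: you propose to stay in the residue field and use the reindexing $q^{\sum i}\prod_d(q^d-1)^{N-d}$ plus the discriminant of $x^N-1$, but the discriminant only gives $P^{*2}$, and the ``subtlety'' you flag (choosing the right square root) is precisely the nontrivial content there; the paper resolves it by a different device, namely computing $F(\zeta)=\prod_{s<t}(\zeta^{2t}-\zeta^{2s})$ exactly in $\C$ (including its argument) in Lemma \ref{lemma cyclotomic} and only then reducing modulo $p$. Your mod-$p$ plan would still need an equivalent input (e.g.\ counting how many $q^d-1$ are non-squares in $\F_{p^2}$, or a Gauss-sum evaluation) to fix that sign, so as written this is the genuine gap in the proposal. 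Second, a misattribution: the class number $h(-p)$ does not enter through the within-block product $\prod_{j<j'}(\overline{j^2}-\overline{j'^2})$, which after raising to the exponent $p+1$ contributes only $(-1)^{(p+1)/2}$ by Sun's identity; it enters through the cross-direction part, concretely through $\prod_{s,t}\bigl(\frac{s+t}{p}\bigr)$ inside $D_p^{(p-1)/2}$, which is converted to $(-1)^{(h(-p)+1)/2}$ via the class number formula and Mordell's congruence. Neither point invalidates the strategy, but both must be supplied to turn the outline into a proof.
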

The proof of the above Theorem will be given in Section 2.
\maketitle

\section{Proof of the main result}
\setcounter{lemma}{0}
\setcounter{theorem}{0}
\setcounter{corollary}{0}
\setcounter{remark}{0}
\setcounter{equation}{0}
\setcounter{conjecture}{0}
\setcounter{proposition}{0}

Recall that $a_k=k+\sqrt{\Delta}$ for $k=0,1,\cdots,p-1.$ We need the following several lemmas involving $a_k$. For convenience, we write $p\Z[\sqrt{\Delta}]=\mathfrak{p}$.

\begin{lemma}\label{lemma ak A}
Let $A_p=\prod_{0\le k\le p-1}a_k$. Then we have
$$A_p^{\frac{(p-1)(p-3)}{4}}\equiv\begin{cases}\Delta^{-\frac{p-1}{4}}\pmod {\mathfrak{p}}&\mbox{if}\ p\equiv 1\pmod4,\\(-1)^{\frac{p-3}{4}}\pmod {\mathfrak{p}}&\mbox{if}\ p\equiv 3\pmod4.\end{cases}$$
\end{lemma}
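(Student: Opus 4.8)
The plan is to first pin down $A_p$ modulo $\mathfrak{p}$ and then raise the result to the power $(p-1)(p-3)/4$. Working in the residue field $\Z[\sqrt{\Delta}]/\mathfrak{p}\cong\F_{p^2}$, I write $\alpha$ for the image of $\sqrt{\Delta}$, so that $\alpha^2=\Delta$ and, since $\Delta$ is a quadratic non-residue modulo $p$, $\alpha\notin\F_p$. Over $\F_p$ one has the polynomial identity $\prod_{k=0}^{p-1}(X+k)=X^p-X$, because $\{-k:0\le k\le p-1\}$ runs over all of $\F_p$ and $\prod_{a\in\F_p}(X-a)=X^p-X$. Evaluating this identity at $X=\alpha$ gives $A_p\equiv\alpha^p-\alpha\pmod{\mathfrak{p}}$.

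Next I would compute $\alpha^p$ by the Euler criterion: $\alpha^p=\alpha\cdot(\alpha^2)^{(p-1)/2}=\Delta^{(p-1)/2}\,\alpha=\left(\frac{\Delta}{p}\right)\alpha=-\alpha$, so that $A_p\equiv-2\alpha=-2\sqrt{\Delta}\pmod{\mathfrak{p}}$. Setting $m=(p-1)(p-3)/4$, I note that $(p-1)(p-3)$ is a product of two consecutive even integers and is therefore divisible by $8$, so $m$ is even; hence $(-2)^m=2^m$ and $\alpha^m=\Delta^{m/2}$, giving $A_p^m\equiv 2^m\,\Delta^{m/2}\pmod{\mathfrak{p}}$. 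It then remains to evaluate this scalar, which lies in $\F_p^\times$, using $\Delta^{(p-1)/2}=-1$ and $2^{(p-1)/2}=\left(\frac{2}{p}\right)$.

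For the two congruence classes I would separate the bookkeeping. When $p\equiv3\pmod4$, writing $s=(p-3)/4$ one has $m=2s(2s+1)$, so $2^m=\bigl(2^{(p-1)/2}\bigr)^{2s}=1$ and $\Delta^{m/2}=\bigl(\Delta^{(p-1)/2}\bigr)^{s}=(-1)^s$, which yields the claimed value $(-1)^{(p-3)/4}$. When $p\equiv1\pmod4$, writing $t=(p-1)/4$ one has $m=2t(2t-1)$, whence $2^m=\left(\frac{2}{p}\right)$ and $\Delta^{m/2}=\Delta^{2t^2}\Delta^{-t}=(-1)^t\Delta^{-t}$; here I would invoke the supplementary law $\left(\frac{2}{p}\right)=(-1)^{(p-1)/4}=(-1)^t$ to cancel the two sign factors and obtain $\Delta^{-(p-1)/4}$.

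I expect no deep obstacle; the argument is essentially careful exponent bookkeeping once $A_p\equiv-2\sqrt{\Delta}$ is established. The one genuinely non-formal input is the cancellation of the power of $2$ in the case $p\equiv1\pmod4$, where the factor $2^m=\left(\frac{2}{p}\right)$ does not vanish and must be absorbed using the second supplementary law of quadratic reciprocity. I would also take care that the polynomial identity $\prod_{k}(X+k)=X^p-X$ is applied only after reduction modulo $p$, since it fails over $\Z[X]$.
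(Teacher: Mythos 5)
Your proof is correct and follows essentially the same route as the paper: both evaluate the identity $\prod_{k=0}^{p-1}(X+k)\equiv X^p-X\pmod{p}$ at $\sqrt{\Delta}$ to get $A_p\equiv-2\sqrt{\Delta}\pmod{\mathfrak{p}}$ and then reduce the exponent $(p-1)(p-3)/4$ using $\Delta^{(p-1)/2}\equiv-1$. The paper leaves the final bookkeeping as "one may easily get the desired result," and your explicit treatment of the two cases, including the cancellation of $\bigl(\tfrac{2}{p}\bigr)=(-1)^{(p-1)/4}$ when $p\equiv1\pmod4$, correctly fills in exactly what is omitted.
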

\begin{proof}
Since
$$\prod_{0\le t\le p-1}(x+t)\equiv x^p-x\pmod {p\Z[x]},$$
we have
$$A_p^{\frac{(p-1)(p-3)}{4}}=\prod_{0\le t\le p-1}(\sqrt{\Delta}+t)^{\frac{(p-1)(p-3)}{4}}
\equiv (-2\sqrt{\Delta})^{\frac{(p-1)(p-3)}{4}}\pmod {\mathfrak{p}}.$$
Observing that $(\sqrt{\Delta})^{p-1}\equiv-1\pmod {\mathfrak{p}}$, one may easily get the desired result.
\end{proof}

\begin{lemma}\label{lemma ak B}
Let $B_p=\prod_{0\le k\le p-1}(1-a_k^{p-1})$. Then we have
$$B_p^{\frac{p-1}{2}}\equiv 1\pmod {\mathfrak{p}}.$$
\end{lemma}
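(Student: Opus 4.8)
The plan is to reduce the product $B_p$ modulo $\mathfrak p$ by exploiting the $p$-power (Frobenius) map on $\F_{p^2}\cong\Z[\sqrt\Delta]/\mathfrak p$, and then to recognize the resulting expression as a quotient that collapses to $1$ using the very congruence that opens the proof of Lemma \ref{lemma ak A}.

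First I would pin down the action of the $p$-power map. Since $\Delta$ is a quadratic non-residue modulo $p$, Euler's criterion gives $(\sqrt\Delta)^{p-1}\equiv\Delta^{(p-1)/2}\equiv-1\pmod{\mathfrak p}$, hence $(\sqrt\Delta)^p\equiv-\sqrt\Delta\pmod{\mathfrak p}$. As $x\mapsto x^p$ is a ring homomorphism modulo $\mathfrak p$ that fixes $\Z/p\Z$, it follows that
$$a_k^p=(k+\sqrt\Delta)^p\equiv k-\sqrt\Delta\pmod{\mathfrak p}\qquad(0\le k\le p-1).$$
Note that every $a_k$ is a unit modulo $\mathfrak p$: if $k+\sqrt\Delta\equiv0$ then $\sqrt\Delta$ would lie in $\Z/p\Z$, contradicting that $p$ is inert in $\Q(\sqrt\Delta)$. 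Hence the divisions below are legitimate.

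Next I would compute each factor. Dividing the previous congruence by $a_k$ gives
$$a_k^{p-1}\equiv\frac{k-\sqrt\Delta}{k+\sqrt\Delta}\pmod{\mathfrak p},\qquad\text{so}\qquad 1-a_k^{p-1}\equiv\frac{2\sqrt\Delta}{k+\sqrt\Delta}\pmod{\mathfrak p}.$$
Taking the product over $0\le k\le p-1$ and writing $A_p=\prod_{0\le k\le p-1}a_k$ as in Lemma \ref{lemma ak A}, I obtain
$$B_p\equiv\frac{(2\sqrt\Delta)^p}{A_p}\pmod{\mathfrak p}.$$

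Finally I would simplify the numerator and denominator. From the congruence $\prod_{0\le t\le p-1}(x+t)\equiv x^p-x\pmod{p\Z[x]}$ (the starting point of the proof of Lemma \ref{lemma ak A}) with $x=\sqrt\Delta$, together with $(\sqrt\Delta)^p\equiv-\sqrt\Delta$, one gets $A_p\equiv-2\sqrt\Delta\pmod{\mathfrak p}$; and since $2^p\equiv2\pmod p$, the numerator satisfies $(2\sqrt\Delta)^p\equiv2\,(\sqrt\Delta)^p\equiv-2\sqrt\Delta\pmod{\mathfrak p}$. Hence $B_p\equiv1\pmod{\mathfrak p}$, which is in fact stronger than the assertion and immediately yields $B_p^{(p-1)/2}\equiv1\pmod{\mathfrak p}$. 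I do not expect a genuine obstacle here; the only points demanding care are the verification that each $a_k$ is a unit modulo $\mathfrak p$ (so that the factors may be inverted and multiplied) and the bookkeeping of the powers of $2$ and of $\sqrt\Delta$ when reducing $(2\sqrt\Delta)^p$ and $A_p$.
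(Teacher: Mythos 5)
Your proof is correct, and it takes a genuinely different and in fact sharper route than the paper. Both arguments start from the Frobenius computation $a_k^{p-1}\equiv\frac{k-\sqrt\Delta}{k+\sqrt\Delta}$, hence $1-a_k^{p-1}\equiv\frac{2\sqrt\Delta}{k+\sqrt\Delta}\pmod{\mathfrak p}$. The paper then raises everything to the power $\frac{p-1}{2}$ at the outset, splits the index range at $\frac{p-1}{2}$ to pair $k$ with $p-k$, converts the resulting factors $(\Delta-k^2)^{-(p-1)/2}$ into Legendre symbols via the congruence $\prod_{1\le k\le(p-1)/2}(x-k^2)\equiv x^{(p-1)/2}-1\pmod{p\Z[x]}$, and concludes with $\l(\frac{-2}{p}\r)^2=1$. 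You instead evaluate the full product directly: $B_p\equiv(2\sqrt\Delta)^p/A_p$, and both numerator and denominator reduce to $-2\sqrt\Delta$ via $\prod_{0\le t\le p-1}(x+t)\equiv x^p-x\pmod{p\Z[x]}$ and Fermat's little theorem. This yields the stronger conclusion $B_p\equiv1\pmod{\mathfrak p}$ (not merely $B_p^{(p-1)/2}\equiv1$), avoids all Legendre-symbol bookkeeping, and reuses exactly the computation of $A_p$ from Lemma \ref{lemma ak A}; the only price is the (correctly supplied) check that each $a_k$ is a unit modulo $\mathfrak p$. The paper's longer route buys nothing extra here --- your argument is a clean improvement for this lemma.
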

\begin{proof}
For each $k=0,\cdots,p-1$ we have
\begin{align}\label{equation congruence}
a_k^p=(k+\sqrt{\Delta})^p\equiv k+(\sqrt{\Delta})^{p-1}\sqrt{\Delta}\equiv k-\sqrt{\Delta}\pmod {\mathfrak{p}}.
\end{align}
Hence we have the following congruences
\begin{align*}
B_p^{\frac{p-1}{2}}\equiv&\prod_{0\le k\le p-1}\(1-\frac{k-\sqrt{\Delta}}{k+\sqrt{\Delta}}\)^{\frac{p-1}{2}}
\\=& 2^{p\frac{p-1}{2}}(\sqrt{\Delta})^{\frac{(p-1)^2}{2}}\prod_{1\le k\le \frac{p-1}{2}}\(\frac{1}{k+\sqrt{\Delta}}\)^{\frac{p-1}{2}}\(\frac{1}{p-k+\sqrt{\Delta}}\)^{\frac{p-1}{2}}
\\\equiv& \(\frac{-2}{p}\)\prod_{1\le k\le \frac{p-1}{2}}\(\frac{1}{\Delta-k^2}\)^{\frac{p-1}{2}}\pmod{\mathfrak{p}}.
\end{align*}
Noting that
\begin{align}\label{equation quadratic residues}
\prod_{1\le k\le \frac{p-1}{2}}(x-k^2)\equiv x^{\frac{p-1}{2}}-1\pmod {p\Z[x]},
\end{align}
we obtain 
$$\prod_{1\le k\le \frac{p-1}{2}}\(\frac{1}{\Delta-k^2}\)^{\frac{p-1}{2}}\equiv \(\frac{-2}{p}\)\pmod {\mathfrak{p}}.$$
Hence
$$B_p^{\frac{p-1}{2}}\equiv 1\pmod {\mathfrak{p}}.$$
\end{proof}

\begin{lemma}\label{lemma ak C}
Let $C_p=\prod_{1\le s<t\le p-1}\frac{1}{(t+\sqrt{\Delta})(s+\sqrt{\Delta})}$. Then
$$C_p^{\frac{p-1}{2}}\equiv \(\frac{-2}{p}\)\pmod {\mathfrak{p}}.$$
\end{lemma}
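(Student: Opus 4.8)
The plan is to collapse the double product defining $C_p$ into a product of single linear factors by a multiplicity count, to evaluate that product modulo $\mathfrak{p}$ with the very polynomial identity already exploited in Lemma \ref{lemma ak A}, and to finish with Euler's criterion.

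First I would count how often each factor appears. In the product over pairs $1\le s<t\le p-1$, a fixed $(k+\sqrt{\Delta})^{-1}$ with $1\le k\le p-1$ contributes exactly once for every unordered pair $\{s,t\}$ that contains $k$, and there are precisely $p-2$ such pairs. Hence
$$C_p=\prod_{1\le k\le p-1}\frac1{(k+\sqrt{\Delta})^{p-2}}.$$
Next I would evaluate $P:=\prod_{1\le k\le p-1}(k+\sqrt{\Delta})$ modulo $\mathfrak{p}$. Starting from $\prod_{0\le t\le p-1}(x+t)\equiv x^p-x\pmod{p\Z[x]}$, I substitute $x=\sqrt{\Delta}$, peel off the $t=0$ factor $\sqrt{\Delta}$, and use $(\sqrt{\Delta})^{p-1}=\Delta^{(p-1)/2}\equiv-1\pmod{\mathfrak{p}}$ (since $\Delta$ is a quadratic non-residue). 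This gives $\sqrt{\Delta}\cdot P\equiv(\sqrt{\Delta})^p-\sqrt{\Delta}\equiv-2\sqrt{\Delta}\pmod{\mathfrak{p}}$, so cancelling the unit $\sqrt{\Delta}$ yields $P\equiv-2\pmod{\mathfrak{p}}$.

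Finally I would raise to the power $(p-1)/2$, so that $C_p^{(p-1)/2}\equiv P^{-(p-2)(p-1)/2}\pmod{\mathfrak{p}}$. Because $P\equiv-2$ lies in $\F_p^{\times}\subseteq\F_{p^2}^{\times}$, its order divides $p-1$, so the exponent may be reduced modulo $p-1$; writing $p-1=2n$ one checks $(p-2)(p-1)/2=(2n-1)n\equiv-n\equiv n=(p-1)/2\pmod{p-1}$. Thus $C_p^{(p-1)/2}\equiv(-2)^{-(p-1)/2}\pmod{\mathfrak{p}}$, and since $(-2)^{(p-1)/2}\equiv(\frac{-2}{p})=\pm1$ is its own inverse, we recover $C_p^{(p-1)/2}\equiv(\frac{-2}{p})\pmod{\mathfrak{p}}$.

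The computations are elementary throughout; the only place demanding care is the exponent bookkeeping — getting the multiplicity $p-2$ right and reducing $(p-2)(p-1)/2$ correctly modulo $p-1$ — together with the key structural observation that $P$ already reduces into the prime field $\F_p$, which is precisely what licenses reducing exponents modulo $p-1$ rather than modulo $p^2-1$. I anticipate no deeper obstacle than this piece of arithmetic.
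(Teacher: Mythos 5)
Your proof is correct, and it takes a genuinely different and more elementary route than the paper's. You collapse the double product by noting that each factor $(k+\sqrt{\Delta})^{-1}$ occurs in exactly $p-2$ of the pairs $\{s,t\}\subseteq\{1,\dots,p-1\}$, so that $C_p=P^{-(p-2)}$ with $P=\prod_{1\le k\le p-1}(k+\sqrt{\Delta})$, and you then read off $P\equiv-2\pmod{\mathfrak{p}}$ from $\prod_{0\le t\le p-1}(x+t)\equiv x^p-x\pmod{p\Z[x]}$; the remainder is exponent bookkeeping, which is sound and could even be shortened: since $(-2)^{p-1}\equiv1\pmod p$ and $p-2$ is odd, one has $P^{-(p-2)(p-1)/2}\equiv\left(\frac{-2}{p}\right)^{-(p-2)}=\left(\frac{-2}{p}\right)$ at once. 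The paper instead symmetrizes the product under $t\mapsto p-t$, reducing it to the Legendre-symbol product $\prod_{1\le s<t\le (p-1)/2}\left(\frac{\Delta-t^2}{p}\right)\left(\frac{\Delta-s^2}{p}\right)$ plus a cross term handled via the factorization of $x^{p-1}-1$, and must then count the representations $x^2+y^2\equiv\Delta$ and $x^2+\Delta y^2\equiv\Delta$ with $1\le x,y\le\frac{p-1}{2}$ in order to evaluate that symbol product. Your multiplicity count sidesteps those counting identities entirely; since the intermediate congruences in the paper's proof of this lemma are not reused elsewhere (Lemma 2.4 only invokes the statement of Lemma 2.3 and the quadratic-residue polynomial identity), your argument would serve as a shorter drop-in replacement.
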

\begin{proof}
Clearly we have
\begin{align*}
C_p=&\prod_{1\le s<t\le \frac{p-1}{2}}\frac{1}{(t+\sqrt{\Delta})(s+\sqrt{\Delta})}
\frac{1}{(p-t+\sqrt{\Delta})(p-s+\sqrt{\Delta})}\\\times
&\prod_{1\le s\le \frac{p-1}{2}}\prod_{1\le t\le \frac{p-1}{2}}\frac{1}{(p-t+\sqrt{\Delta})(s+\sqrt{\Delta})}.
\end{align*}
Hence we obtain 
\begin{align*}
C_p^{\frac{p-1}{2}}\equiv& \prod_{1\le s<t\le \frac{p-1}{2}}\(\frac{\Delta-t^2}{p}\)\(\frac{\Delta-s^2}{p}\)
\\\times&\prod_{1\le s\le \frac{p-1}{2}}\prod_{1\le t\le \frac{p-1}{2}}\(\frac{1}{(\sqrt{\Delta}-t)(\sqrt{\Delta}+s)}\)^{\frac{p-1}{2}}\pmod{\mathfrak{p}}.
\end{align*}
We first handle the product
$$\prod_{1\le s\le \frac{p-1}{2}}\prod_{1\le t\le \frac{p-1}{2}}\(\frac{1}{(\sqrt{\Delta}-t)(\sqrt{\Delta}+s)}\)^{\frac{p-1}{2}}\pmod{\mathfrak{p}}.$$
Noting that
\begin{align*}
\prod_{1\le s\le \frac{p-1}{2}}(x+s)\prod_{1\le t\le \frac{p-1}{2}}(x-t)\equiv x^{p-1}-1\pmod{p\Z[x]},
\end{align*}
we therefore get that
$$\prod_{1\le t\le \frac{p-1}{2}}(\sqrt{\Delta}-t)\equiv \frac{-2}{\prod_{1\le s\le \frac{p-1}{2}}(\sqrt{\Delta}+s)}\pmod{\mathfrak{p}}.$$
Hence
\begin{align}\label{equation C}
\prod_{1\le s\le \frac{p-1}{2}}\prod_{1\le t\le \frac{p-1}{2}}\(\frac{1}{(\sqrt{\Delta}-t)(\sqrt{\Delta}+s)}\)^{\frac{p-1}{2}}
\equiv\(\frac{-2}{p}\)^{\frac{p-1}{2}}\pmod{\mathfrak{p}}.
\end{align}
We now turn to the product
$$\prod_{1\le s<t\le \frac{p-1}{2}}\(\frac{\Delta-t^2}{p}\)\(\frac{\Delta-s^2}{p}\).$$
One can easily verify the following identities
\begin{align}\label{equation sums A}
&\#\{(x^2,y^2): 1\le x,y\le\frac{p-1}{2},x^2+y^2\equiv \Delta\pmod p\}\\&=
\begin{cases}\frac{p-1}{4}&\mbox{if}\ p\equiv 1\pmod4,\\\frac{p+1}{4}&\mbox{if}\ p\equiv 3\pmod4.\end{cases}
\end{align}
and
\begin{align}\label{equation sums B}
&\#\{(x^2,y^2): 1\le x,y\le\frac{p-1}{2},x^2+\Delta y^2\equiv \Delta\pmod p\}\\&=
\begin{cases}\frac{p-1}{4}&\mbox{if}\ p\equiv 1\pmod4,\\\frac{p-3}{4}&\mbox{if}\ p\equiv 3\pmod4.\end{cases}
\end{align}
From the above we see that
\begin{align*}
&\#\{(s,t): 1\le s<t\le \frac{p-1}{2}: \(\frac{\Delta-t^2}{p}\)\(\frac{\Delta-s^2}{p}\)=-1\}
\\&=\begin{cases}\frac{(p-1)^2}{16}&\mbox{if}\ p\equiv 1\pmod4,\\\frac{p-3}{4}\cdot\frac{p+1}{4}&\mbox{if}\ p\equiv 3\pmod4.\end{cases}
\end{align*}
Therefore

\begin{align}\label{equation legendre}
\prod_{1\le s<t\le \frac{p-1}{2}}\(\frac{\Delta-t^2}{p}\)\(\frac{\Delta-s^2}{p}\)=
\begin{cases}(-1)^{\frac{p-1}{4}}&\mbox{if}\ p\equiv 1\pmod4,\\1&\mbox{if}\ p\equiv 3\pmod4.\end{cases}
\end{align}
Then our desired result follows from (\ref{equation C}) and (\ref{equation legendre}).
\end{proof}

\begin{lemma}\label{lemma ak D}
Let $D_p=\prod_{0\le s<t\le p-1}(a_t^{p-1}-a_s^{p-1})$. Then $D_p^{\frac{p-1}{2}}\pmod {\mathfrak{p}}$ is equal to
$$\begin{cases}(\sqrt{\Delta})^{-\frac{(p-1)^2}{4}}\pmod{\mathfrak{p}}&\mbox{if}\ p\equiv 1\pmod4,
\\(\sqrt{\Delta})^{-\frac{(p-1)^2}{4}}(-1)^{\frac{h(-p)+1}{2}}\cdot(\frac{2}{p})\pmod{\mathfrak{p}}&\mbox{if}\ p\equiv 3\pmod4\ \text{and}\ p>3,\\-(\sqrt{\Delta})^{-1}\pmod{\mathfrak{p}}&\mbox{if}\ p=3.
\end{cases}$$
\end{lemma}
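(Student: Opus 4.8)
The plan is to evaluate $D_p$ modulo $\mathfrak p$ factor by factor by means of (\ref{equation congruence}). Writing $\theta=\sqrt\Delta$, that congruence gives $a_k^{p-1}=a_k^p a_k^{-1}\equiv(k-\theta)(k+\theta)^{-1}\pmod{\mathfrak p}$, and a one-line simplification produces the key identity
\[
a_t^{p-1}-a_s^{p-1}\equiv\frac{2\theta\,(t-s)}{(t+\theta)(s+\theta)}\pmod{\mathfrak p}.
\]
Substituting this into $D_p=\prod_{0\le s<t\le p-1}(a_t^{p-1}-a_s^{p-1})$ and collecting the $\binom p2$ factors, I would split $D_p$ into three pieces: the power $(2\theta)^{\binom p2}$; the integer Vandermonde product $V=\prod_{0\le s<t\le p-1}(t-s)=\prod_{d=1}^{p-1}d^{\,p-d}$; and the reciprocal denominator $\prod_{k=0}^{p-1}(k+\theta)^{-(p-1)}$, where each factor $k+\theta$ occurs with multiplicity $p-1$. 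This last product is governed by the same identity underlying Lemma~\ref{lemma ak C}.

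Next I would raise to the power $\tfrac{p-1}2$ and clear the elementary factors. By the polynomial identity $\prod_{t=0}^{p-1}(x+t)\equiv x^p-x\pmod{p\Z[x]}$ used throughout Lemmas~\ref{lemma ak A}--\ref{lemma ak C}, one has $\prod_{k=0}^{p-1}(k+\theta)\equiv\theta^p-\theta\equiv-2\theta\pmod{\mathfrak p}$, so the reciprocal denominator equals $(-2\theta)^{-(p-1)}\equiv-1$ and leaves no residual power of $\theta$. The powers of $2$ are turned into Legendre symbols by Euler's criterion, and the powers of $\theta$ are reduced through $\theta^{p-1}\equiv-1$ and $\theta^{2(p-1)}\equiv1\pmod{\mathfrak p}$; since $8\mid p^2-1$ for every odd $p$, the accumulated $\theta$-exponent collapses to the single power $\theta^{-(p-1)^2/4}$ appearing in the statement. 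This bookkeeping accounts for $\theta^{-(p-1)^2/4}$ and, when $p\equiv3\pmod4$, for the Legendre factor $\left(\frac2p\right)$.

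The only substantive input is the integer Vandermonde. By Euler's criterion $d^{(p-1)/2}\equiv\left(\frac dp\right)\pmod p$, so $V^{(p-1)/2}\equiv\prod_{d=1}^{p-1}\left(\frac dp\right)^{p-d}\pmod p$; since each symbol is $\pm1$, only the parity of $p-d$ matters, the odd-$d$ factors drop out, and there remains
\[
\prod_{2\mid d}\left(\frac dp\right)=\left(\frac2p\right)^{(p-1)/2}\left(\frac{((p-1)/2)!}{p}\right).
\]
For $p\equiv1\pmod4$ I would evaluate the half-factorial from $\left(\left(\tfrac{p-1}2\right)!\right)^2\equiv-1\pmod p$. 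For $p\equiv3\pmod4$ with $p>3$ one has $\left(\tfrac{p-1}2\right)!\equiv\pm1\pmod p$, and the decisive ingredient is the class-number congruence $\left(\tfrac{p-1}2\right)!\equiv(-1)^{(h(-p)+1)/2}\pmod p$: this is where $h(-p)$ enters, and it is the step I expect to be the main obstacle, since it rests on Dirichlet's analytic class number formula for $\Q(\sqrt{-p})$ rather than on elementary manipulation. The exceptional value $p=3$ is then verified by a direct computation, the three products being too short for this analysis. Assembling the power of $\theta$, the Legendre factors and the class-number sign gives the asserted value of $D_p^{\frac{p-1}2}$.
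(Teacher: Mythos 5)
Your strategy is essentially the paper's --- both begin with $a_t^{p-1}-a_s^{p-1}\equiv 2\sqrt{\Delta}(t-s)/\bigl((t+\sqrt{\Delta})(s+\sqrt{\Delta})\bigr)\pmod {\mathfrak{p}}$ and split $D_p$ into a power of $2\sqrt{\Delta}$, an integer Vandermonde, and a reciprocal denominator --- but two of your sub-evaluations are genuinely cleaner. Observing that each $k+\sqrt{\Delta}$ occurs in the denominator with multiplicity exactly $p-1$, so that the whole denominator is $\bigl(\prod_k(k+\sqrt{\Delta})\bigr)^{p-1}\equiv(-2\sqrt{\Delta})^{p-1}\equiv-1\pmod{\mathfrak{p}}$, bypasses Lemma \ref{lemma ak C} and its solution-counting entirely; and writing the Vandermonde as $\prod_d d^{\,p-d}$, so that its $(p-1)/2$-th power is $(\frac{2}{p})^{(p-1)/2}(\frac{((p-1)/2)!}{p})$, replaces the paper's count of pairs $1\le s,t\le(p-1)/2$ with prescribed $s+t$. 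Your appeal to Mordell's congruence $((p-1)/2)!\equiv(-1)^{(h(-p)+1)/2}\pmod p$ is equivalent to the paper's use of the class number formula, so the deep input is the same.

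The gap is in the step you wave through as ``bookkeeping'': carried out, your assembly does not yield the stated formula. For $p\equiv1\pmod 4$ your three pieces multiply to $(-1)^{(p-1)/4}(\sqrt{\Delta})^{-(p-1)^2/4}$, which differs from the Lemma's $(\sqrt{\Delta})^{-(p-1)^2/4}$ by $(-1)^{(p-1)/4}=(\frac{2}{p})$. The numerics side with your method: for $p=5$, $\Delta=3$ one finds $a_k^4\equiv 4,\;3+\sqrt3,\;2+\sqrt3,\;2+4\sqrt3,\;3+4\sqrt3\pmod{\mathfrak{p}}$, whence $D_5\equiv-1$ and $D_5^{2}\equiv1$, whereas $(\sqrt3)^{-4}=3^{-2}\equiv-1\pmod 5$. (The same discrepancy, a factor of $\pm(\frac{2}{p})$, enters the paper's own proof at the passage to the constant $(\frac{-2}{p})^{(p+1)/2}$, which a careful expansion replaces by $(\frac{2}{p})^{(p-1)/2}(-1)^{(p-1)/2}$.) So your final sentence, asserting that the bookkeeping ``gives the asserted value,'' is not only unproved but false for $p\equiv5\pmod 8$: you must either locate the compensating $(\frac{2}{p})$ you have implicitly assumed, or accept that your (correct) computation contradicts the Lemma as printed, in which case a factor of $(\frac{2}{p})$ has to be inserted and propagated into Theorem \ref{theorem A}.
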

\begin{proof}
From (\ref{equation congruence}) one may easily verify that
$D_p^{\frac{p-1}{2}}\pmod {\mathfrak{p}}$ is equal to
\begin{align*}
\(\frac{t-\sqrt{\Delta}}{t+\sqrt{\Delta}}-\frac{s-\sqrt{\Delta}}{s+\sqrt{\Delta}}\)^{\frac{p-1}{2}}
\equiv\prod_{0\le s<t\le p-1}\(\frac{2\sqrt{\Delta}(t-s)}{(t+\sqrt{\Delta})(s+\sqrt{\Delta})}\)^{\frac{p-1}{2}}
\pmod{\mathfrak{p}}
\end{align*}
From this we further obtain that the above is equal to
\begin{align*}
\(\frac{-2}{p}\)^{\frac{p+1}{2}}\(\frac{-1}{\sqrt{\Delta}}\)^{\frac{(p-1)^2}{4}}C_p^{\frac{p-1}{2}}
\prod_{0<t<p}\(\frac{1}{t+\sqrt{\Delta}}\)^{\frac{p-1}{2}}\prod_{0<s<t<p}(t-s)^{\frac{p-1}{2}} \pmod{\mathfrak{p}}.
\end{align*}
We first handle the product
$$\prod_{1\le t\le p-1}\(\frac{1}{t+\sqrt{\Delta}}\)^{\frac{p-1}{2}}.$$
By (\ref{equation quadratic residues}) we have
\begin{align}\label{equation D}
\prod_{1\le t\le p-1}\(\frac{1}{t+\sqrt{\Delta}}\)^{\frac{p-1}{2}}\equiv
\prod_{1\le t\le\frac{p-1}{2}}\(\frac{1}{\Delta-t^2}\)^{\frac{p-1}{2}}
\equiv\(\frac{-2}{p}\)\pmod{\mathfrak{p}}.
\end{align}
We turn to the product
$$\prod_{1\le s<t\le p-1}(t-s)^{\frac{p-1}{2}}.$$
It is clear that
$$\prod_{1\le s<t\le p-1}(t-s)^{\frac{p-1}{2}}\pmod{\mathfrak{p}}$$
is equal to
\begin{align*}\label{equation E}
&\prod_{1\le s<t\le\frac{p-1}{2}}\(\frac{t-s}{p}\)\(\frac{-s+t}{p}\)
\prod_{1\le s\le\frac{p-1}{2}}\prod_{1\le t\le\frac{p-1}{2}}\(\frac{-1}{p}\)\(\frac{t+s}{p}\)
\\&\equiv(-1)^{\frac{p-1}{2}}\prod_{1\le s\le\frac{p-1}{2}}\prod_{1\le t\le\frac{p-1}{2}}\(\frac{t+s}{p}\)
\pmod{\mathfrak{p}}.
\end{align*}
We now divide our proof into the following two cases.

{\it Case} 1. $p\equiv1\pmod4$.

Let $1\le w\le \frac{p-1}{2}$ be an arbitrary quadratic non-residue modulo $p$. Then
$$\#\{(s,t): 1\le s,t\le\frac{p-1}{2}, s+t\equiv w\pmod p\}=w-1,$$
and
$$\#\{(s,t): 1\le s,t\le\frac{p-1}{2}, s+t\equiv p-w\pmod p\}=w.$$
Hence when $p\equiv 1\pmod 4$ we have
\begin{equation}\label{equation F}
\prod_{1\le s\le \frac{p-1}{2}}\prod_{1\le t\le \frac{p-1}{2}}\(\frac{t+s}{p}\)
=(-1)^{\#\{1\le w\le\frac{p-1}{2}:(\frac{w}{p})=-1\}}=(-1)^{\frac{p-1}{4}}
\end{equation}

{\it Case} 2. $p\equiv 3\pmod4$.

Let $1\le w\le \frac{p-1}{2}$ be an arbitrary quadratic non-residue modulo $p$, and let
$1\le v\le \frac{p-1}{2}$ be an arbitrary quadratic residue modulo $p$.
Then
$$\#\{(s,t): 1\le s,t\le\frac{p-1}{2}, s+t\equiv w\pmod p\}=w-1,$$
and
$$\#\{s,t): 1\le s,t\le\frac{p-1}{2}, s+t\equiv p-v\pmod p\}=v.$$
Hence
\begin{equation*}
\prod_{1\le s\le \frac{p-1}{2}}\prod_{1\le t\le \frac{p-1}{2}}\(\frac{t+s}{p}\)
=(-1)^{\#\{1\le w\le\frac{p-1}{2}: (\frac{w}{p})=-1\}}\cdot(-1)^{\frac{p^2-1}{8}}.
\end{equation*}
For each $p\equiv3\pmod4$ let $h(-p)$ be the class number formula of $\Q(\sqrt{-p})$. When $p>3$, by the
class number formula we have
$$(2-\(\frac{2}{p}\))h(-p)=\frac{p-1}{2}-2\#\{1\le w\le\frac{p-1}{2}: \(\frac{w}{p}\)=-1\}.$$
From this one may easily verify that
$$\#\{1\le w\le\frac{p-1}{2}: \(\frac{w}{p}\)=-1\}\equiv \frac{h(-p)+1}{2}\pmod2.$$
The readers may also see Mordell's paper \cite{M} for details.

From the above, we obtain
\begin{equation}\label{equation G}
\prod_{1\le s\le \frac{p-1}{2}}\prod_{1\le t\le \frac{p-1}{2}}\(\frac{t+s}{p}\)
=\begin{cases}(-1)^{\frac{h(-p)+1}{2}}\cdot(\frac{2}{p})&\mbox{if}\ p\equiv 3\pmod4\ \text{and}\ p>3,
\\-1&\mbox{if}\ p=3.\end{cases}
\end{equation}
In view of the above, we obtain the desired result.
\end{proof}

We let $\Phi_{p^2-1}(x)\in\Z[x]$ denote the $(p^2-1)$-th cyclotomic polynomial. We also let
$$F(x)=\prod_{1\le s<t\le (p^2-1)/2}(x^{2t}-x^{2s}),$$
and let
$$T(x)=(-1)^{\frac{p^2+7}{8}}\(\frac{p^2-1}{2}\)^{\frac{p^2-1}{4}}\cdot x^{\frac{(p^2-1)}{4}}\in\Z[x].$$
We need the following lemma. We also set $\zeta=e^{2\pi i/(p^2-1)}$.
\begin{lemma}\label{lemma cyclotomic}
$\Phi_{p^2-1}(x)\mid F(x)-T(x)$ in $\Z[x]$.
\end{lemma}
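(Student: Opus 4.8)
The plan is to reduce the asserted divisibility in $\Z[x]$ to a single evaluation at the primitive root $\zeta=e^{2\pi i/(p^2-1)}$. Since $\Phi_{p^2-1}(x)$ is monic in $\Z[x]$ and is the minimal polynomial of $\zeta$ over $\Q$, irreducibility gives that $\Phi_{p^2-1}(x)\mid F(x)-T(x)$ in $\Q[x]$ exactly when $F(\zeta)=T(\zeta)$; and since the divisor is monic, dividing the integral polynomial $F-T$ by $\Phi_{p^2-1}$ leaves an integral quotient (Gauss's lemma), so divisibility in $\Q[x]$ upgrades for free to divisibility in $\Z[x]$. Thus the whole lemma collapses to proving the single identity $F(\zeta)=T(\zeta)$.

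Next I would set $m=(p^2-1)/2$ and $\omega=\zeta^2$. Because $p^2-1$ is even, $\omega$ is a primitive $m$-th root of unity, and as $j$ runs over $1,\dots,m$ the powers $\omega^j$ run over all $m$-th roots of unity; hence $F(\zeta)=\prod_{1\le s<t\le m}(\omega^{t}-\omega^{s})$ is a Vandermonde product over the full set of $m$-th roots of unity. Its modulus I would extract by differentiating $g(x)=x^m-1=\prod_{j=1}^{m}(x-\omega^{j})$: one gets $g'(\omega^{k})=\prod_{j\ne k}(\omega^{k}-\omega^{j})=m\omega^{-k}$, and multiplying over $k$ while using $\omega^{m/2}=-1$ (valid as $m$ is even) gives $\prod_{k}\prod_{j\ne k}(\omega^{k}-\omega^{j})=-m^{m}$. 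Since this ordered double product also equals $(-1)^{\binom{m}{2}}F(\zeta)^{2}$, and $\binom{m}{2}$ is even (because $m\equiv 0\pmod 4$, as $p^2\equiv 1\pmod 8$), I conclude $F(\zeta)^{2}=-m^{m}$, so $|F(\zeta)|=m^{m/2}$.

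For the argument I would factor each term as $\omega^{t}-\omega^{s}=2i\,\sin\!\left(\pi(t-s)/m\right)\omega^{s}e^{\pi i(t-s)/m}$, where the sines are positive since $1\le t-s\le m-1$. Collecting all positive real factors into $|F(\zeta)|=m^{m/2}$ and the unit-modulus factors separately, the phase of $F(\zeta)$ becomes $i^{\binom{m}{2}}\exp\!\left(\tfrac{\pi i}{m}\sum_{s<t}(s+t)\right)$. Using $\sum_{1\le s<t\le m}(s+t)=(m-1)\tfrac{m(m+1)}{2}$ and $m\equiv 0\pmod 4$, this phase reduces to $(-1)^{k+1}i$ with $k=(p^2-1)/8$, so $F(\zeta)=(-1)^{k+1}m^{m/2}i$. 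On the other side $\zeta^{(p^2-1)/4}=\zeta^{m/2}=i$, whence $T(\zeta)=(-1)^{(p^2+7)/8}m^{m/2}i$; since $k+1=(p^2-1)/8+1=(p^2+7)/8$, the two signs agree and $F(\zeta)=T(\zeta)$.

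I expect the sign determination to be the main obstacle: the modulus computation alone only pins $F(\zeta)$ down to $\pm i\,m^{m/2}$, so the entire substance of the lemma is choosing the correct branch. This requires the careful phase bookkeeping above — in particular the correct evaluation of $i^{\binom{m}{2}}$ and of the exponential sum $\exp\!\left(\tfrac{\pi i}{m}\sum_{s<t}(s+t)\right)$ — and the congruence $m\equiv 0\pmod 4$ (equivalently $p^2\equiv 1\pmod 8$) is exactly what makes both of these simplify cleanly, and hence is essential to the argument.
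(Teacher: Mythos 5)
Your proposal is correct and follows essentially the same route as the paper: reduce the divisibility to the single evaluation $F(\zeta)=T(\zeta)$, obtain $F(\zeta)^2=-\bigl(\tfrac{p^2-1}{2}\bigr)^{(p^2-1)/2}$ from the Vandermonde product over all $\tfrac{p^2-1}{2}$-th roots of unity via the derivative of $x^m-1$, and then pin down the sign by factoring each difference as $\zeta^{t+s}$ times a positive multiple of $i$ and summing the arguments. Your write-up is only more explicit than the paper's in two places (the Gauss's lemma step upgrading $\Q[x]$-divisibility to $\Z[x]$, and the role of $m\equiv 0\pmod 4$), but the substance is identical.
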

\begin{proof}
It is sufficient to prove that $F(\zeta)=T(\zeta)$.
We first compute $F(\zeta)^2$. We have the following equalities:
\begin{align*}
F(\zeta)^2=&\prod_{1\le s<t\le \frac{p^2-1}{2}}(\zeta^{2t}-\zeta^{2s})^2
\\=&(-1)^{\frac{(p^2-1)(p^2-3)}{8}}\cdot\prod_{1\le s\ne t\le \frac{p^2-1}{2}}(\zeta^{2t}-\zeta^{2s})
\\=&\prod_{1\le t\le\frac{p^2-1}{2}}\frac{x^{\frac{p^2-1}{2}}-1}{x-\zeta^{2t}}\Big|_{x=\zeta^{2t}}
\\=&\(\frac{p^2-1}{2}\)^{\frac{p^2-1}{2}}\prod_{1\le t\le \frac{p^2-1}{2}}\zeta^{-2t}=-1\cdot\(\frac{p^2-1}{2}\)^{\frac{p^2-1}{2}}.
\end{align*}
Hence $F(\zeta)=\pm i\cdot(\frac{p^2-1}{2})^{\frac{p^2-1}{2}}$. We now compute the argument of
$F(\zeta)$. Noting that for any $1\le s<t\le (p^2-1)/2$ we have
$$\zeta^{2t}-\zeta^{2s}=
\zeta^{t+s}(\zeta^{t-s}-\zeta^{-(t-s)}),$$
we therefore obtain
$$\Arg(\zeta^{2t}-\zeta^{2s})=\frac{2\pi}{p^2-1}(t+s)+\frac{\pi}{2}.$$
From this we have
\begin{align*}
\Arg (F(\zeta))=&\sum_{1\le s<t\le\frac{p^2-1}{2}}\(\frac{2\pi}{p^2-1}(t+s)+\frac{\pi}{2}\)
\\=&\frac{(p^2-1)(p^2-3)\pi}{16}+\frac{2\pi}{p^2-1}\cdot\sum_{1\le s<t\le \frac{p^2-1}{2}}(t+s)
\\\equiv&-\frac{\pi}{2}+\frac{p^2-1}{8}\pi\pmod{2\pi\Z}.
\end{align*}
Therefore
$$F(\zeta)=i(-1)^{\frac{p^2+7}{8}}\(\frac{p^2-1}{2}\)^{\frac{p^2-1}{4}}
=T(\zeta).$$
This completes the proof.
\end{proof}

We are now in the position to prove Theorem \ref{theorem A}.

\noindent{\bf Proof of Theorem\ \ref{theorem A}}.
Let $S=\{\alpha_1,\cdots,\alpha_n\}$ be a finite subset of a finite field and let $\tau$ be a permutation on $S$. Then it follows from definition that the sign of $\tau$ denoted by $\sgn({\tau})$ is
$$\prod_{1\le s<t\le n}\frac{\tau(\alpha_t)-\tau(\alpha_s)}{\alpha_t-\alpha_s}.$$
From this we see that
$$\sgn(\sigma_p)=\prod_{1\le s<t\le \frac{p^2-1}{2}}
\frac{\overline{g^{2t}}-\overline{g^{2s}}}{\sigma_p(\overline{g^{2t}})-\sigma_p(\overline{g^{2s}})}.$$
We first handle the numerator. For convenience, we set $\mathfrak{B}=p\Z_p[\zeta_{p^2-1}]$.
Clearly $\Phi_{p^2-1}(x) \mod p\Z_p[\zeta_{p^2-1}][x]$ splits completely in $\Z_p[\zeta_{p^2-1}]/\mathfrak{B}[x].$  As $g\equiv \zeta_{p^2-1}\pmod{\mathfrak{B}}$
by Lemma \ref{lemma cyclotomic} we see that
$$\prod_{1\le s<t\le \frac{p^2-1}{2}}(g^{2t}-g^{2s})\pmod{\mathfrak{B}}$$
is equal to
\begin{align}\label{equation numerator}
-\(\frac{2}{p}\)\(\frac{p^2-1}{2}\)^{\frac{p^2-1}{4}}g^{\frac{p^2-1}{4}}
\equiv -\(\frac{2}{p}\)\(\frac{-2}{p}\)^{\frac{p+1}{2}}g^{\frac{p^2-1}{4}}
\pmod{\mathfrak{B}}.
\end{align}
We now turn to the denominator. It is easy to verify that
$$\prod_{1\le s<t\le\frac{p^2-1}{2}}(\sigma_p(g^{2t})-\sigma_p(g^{2s}))\pmod {\mathfrak{p}}$$
is equal to
$$A_p^{\frac{(p-1)(p-3)}{4}}B_p^{\frac{p-1}{2}}D_p^{\frac{p-1}{2}}
\prod_{1\le s<t\le\frac{p-1}{2}}(t^2-s^2)^2\pmod {\mathfrak{p}}.$$
By \cite[(1.5)]{S} we have
\begin{equation}\label{equation H}
\prod_{1\le s<t\le\frac{p-1}{2}}(t^2-s^2)^2\equiv (-1)^{\frac{p+1}{2}}\pmod p.
\end{equation}
By the above, we obtain that
$$\prod_{1\le s<t\le\frac{p^2-1}{2}}(\sigma_p(g^{2t})-\sigma_p(g^{2s}))\pmod {\mathfrak{p}}$$
is equal to
\begin{align}\label{equation denominator}
\begin{cases}-\Delta^{-\frac{p-1}{4}}(\sqrt{\Delta})^{-\frac{(p-1)^2}{4}}\pmod{\mathfrak{p}}&\mbox{if}\ p\equiv 1\pmod4,\\
(-1)^{\frac{h(-p)-1}{2}}(\sqrt{\Delta})^{-\frac{(p-1)^2}{4}}\pmod{\mathfrak{p}}&\mbox{if}\ p\equiv 3\pmod4\ \text{and}\ p>3,
\\-(\sqrt{\Delta})^{-1}\pmod{\mathfrak{p}}&\mbox{if}\ p=3.\end{cases}
\end{align}
Let $\sqrt{\Delta}\equiv \zeta_{p^2-1}^{\alpha}\pmod{\mathfrak{B}}$ for some $\alpha\in\Z$. Since
$(\sqrt{\Delta})^{p-1}\equiv-1\pmod{\mathfrak{B}},$ we obtain
$$(p-1)\alpha\equiv \frac{p^2-1}{2}\pmod{p^2-1}.$$
Hence
$$\alpha\equiv\frac{p+1}{2}\pmod {p+1}.$$
We set $\alpha=\frac{p+1}{2}+(p+1)\beta$ for some $\beta\in\Z$. Then we have
$$(\sqrt{\Delta})^{\frac{p-1}{2}}\equiv \zeta_{p^2-1}^{\frac{p^2-1}{4}}\zeta_{p^2-1}^{\frac{p^2-1}{2}\beta}
\pmod{\mathfrak{B}}.$$
From this we get
$$(-1)^{\beta}\equiv \frac{(\sqrt{\Delta})^{\frac{p-1}{2}}}{\zeta_{p^2-1}^{\frac{p^2-1}{4}}}\pmod{\mathfrak{B}}.$$
Therefore $\beta\equiv\beta_0\pmod2$, where $\beta_0$ is defined as in (\ref{beta}). We divide the remaining  proof into three cases.

{\it Case} 1. $p\equiv1\pmod4.$

By (\ref{equation numerator}) and (\ref{equation denominator}) we have
$$\sgn(\sigma_p)\equiv g^{\frac{p^2-1}{4}+\frac{p-1}{2}\alpha+\frac{(p-1)^2}{4}\alpha}\pmod{\mathfrak{B}}.$$
Replacing $\alpha$ by $\frac{p+1}{2}+(p+1)\beta$ and noting that $g^{\frac{p^2-1}{2}}\equiv-1\pmod{\mathfrak{B}}$, we obtain that when $p\equiv1\pmod4$
$$\sgn(\sigma_p)=(-1)^{\beta_0+\frac{p+3}{4}}.$$

{\it Case} 2. $p\equiv3\pmod4$ and $p>3$.

Similar to the Case 1, we have
$$\sgn(\sigma_p)\equiv \(\frac{2}{p}\)g^{\frac{p^2-1}{4}}(-1)^{\frac{h(-p)+1}{2}}g^{\frac{(p-1)^2}{4}\alpha}\pmod{\mathfrak{B}}.$$
Then via computation we obtain that
$$\sgn(\sigma_p)=(-1)^{\frac{h(-p)+1}{2}+\beta_0}.$$

{\it Case} 3. $p=3$.

In this case it is easy to see that
$$\sgn(\sigma_3)=(-1)^{1+\beta_0}.$$

In view of the above, we complete the proof.\qed

\Ack\ This research was supported by the National Natural Science
Foundation of China (Grant No. 11971222).

\end{document}